\def\dist{{\mathop\mathrm{\,dist\,}}}
\def\loc{{\mathop\mathrm{\,loc\,}}}
\def\wz{\widetilde}
\def\bint{{\ifinner\rlap{\bf\kern.35em--}
\int\else\rlap{\bf\kern.45em--}\int\fi}\ignorespaces}
\def\bbint{{\ifinner\rlap{\bf\kern.35em--}
\hspace{0.078cm}\int\else\rlap{\bf\kern.45em--}\int\fi}\ignorespaces}
\newtheorem{thm}{Theorem}[section]
\newtheorem{lem}[thm]{Lemma}%[section]     %@@!!@@!!
\newtheorem{ques}[thm]{Question}
\numberwithin{equation}{section}
\theoremstyle{remark}
\def\bint{{\ifinner\rlap{\bf\kern.35em--}
\int\else\rlap{\bf\kern.45em--}\int\fi}\ignorespaces}
\begin{document}

\title[Schoenflies solutions of conformal boundary values may fail to be Sobolev]
{Schoenflies solutions of conformal boundary values may fail to be Sobolev}

\author{Yi Ru-Ya Zhang}

\address{Hausdorff Center for Mathematics, 
Endenicher Allee 60, 
D-53115 Bonn, Germany. 
}
\email{yizhang@math.uni-bonn.de}

\thanks{This work was supported by the Academy of Finland via the Centre of Excellence in
Analysis and Dynamics Research (Grant no. 271983) and the Hausdorff Center for Mathematics. }
\subjclass[2010]{30C70}
\keywords{Sobolev homeomorphism, Jordan-Schoenflies theorem}
\date{\today}

%%%%%%%%%%%%%%%%%%%%%%%%%%%%%%%%%%%%%%%%%%%%%%%%%%%%%%%%%%%%%%%%%%%%%

\begin{abstract}
There exists a planar Jordan domains $\Omega$ with $1$-Hausdorff dimensional boundary such that, for any conformal map $\varphi \colon \mathbb D \to \Omega$, any homeomorphic extensions  to the entire plane of either $\varphi$  or $\varphi^{-1}$ cannot be in $W^{1,\,1}_{\loc}$ class (or even not in $BV_{\loc}$). 
\end{abstract}

%%%%%%%%%%%%%%%%%%%%%%%%%%%%%%%%%%%%%%%%%%%%%%%%%%%%%%%%%%%%%%%%%%%%%

\maketitle

\section{Introduction}

Let $\Gamma\subset  {\mathbb C}$ be a Jordan curve, namely there exists a homeomorphism $\phi\colon \mathbb S^1 \to \Gamma$, where $ {\mathbb C}$ is the complex plane and $\mathbb S^1$ denotes the boundary of the unit disk $\mathbb D$. According to Jordan curve theorem, the curve $\Gamma$ divides $\mathbb C$ into two components, and we call the bounded  component a Jordan domain. 

Jordan-Schoenflies theorem states that any homeomorphism  between two Jordan curves on $ \mathbb C$ can be extended to a homeomorphism between the entire $ \mathbb C$; see \cite[Corollary 2.9]{P1992}. 
To be more precise, given two Jordan domains $\Omega_1$ and $\Omega_2$ and a homeomorphism  $\varphi: \partial \Omega_1\to \partial \Omega_2$, there exists a homeomorphism $\Phi$, which we call a  {\it Schoenflies solution} of the boundary value $\varphi$, from $  {\mathbb C} $ to $  {\mathbb C}$ such that the restriction of $\Phi$ to $\Gamma_1$ coincides with $\varphi$. 
Then a natural question arises: 
\begin{ques}\label{ques}
Given two Jordan domains $\Omega_1,\,\Omega_2\subset  {\mathbb C}$ together with a homeomorphism  $\varphi\colon \partial \Omega_1\to \partial \Omega_2$, what is the best regularity of  Schoenflies solutions  of the boundary value $\varphi$? 
\end{ques}

Certainly the answer to this question depends on the given boundary value and the geometry of both $\Omega_1$ and $\Omega_2$. Let us recall some known results.  If $\Omega_2$ is bounded by a smooth Jordan curve, then by the techniques from differential topology for each conformal map we can find a smooth Schoenflies solution to any homeomorphism from $\mathbb S^1$ onto $\partial \Omega$.  Assume that $\varphi\colon \mathbb S^1 \to \partial \Omega_2$ is quasisymmetric, via Douady-Earle extension theorem there exists a $K$-quasiconformal Schoenflies solution $\Phi$. By \cite{A1994}, we further have that both $\Phi$ and $\Phi^{-1}$ are in $W^{1,\,p}_{\loc}(\mathbb C)$ for any $p<2K/(K-1)$. Recently P. Koskela, P. Pankka and the author have been working on a version of this result for domains satisfying Gehring--Martio conditions \cite{KPZ2017}.

Recall Carath\'eodory's theorem states that, given any two Jordan domain $\Omega_1$ and $\Omega_2$,  every conformal map $\varphi: \Omega_1 \to \Omega_2$ can be continuously extended to the boundary as a homeomorphism $\varphi: \overline{\Omega}_1 \to  \overline{\Omega}_2$. We abuse $\varphi$ here. 
In this paper we investigate Question~\ref{ques} with the boundary value given by Carath\'eodory's theorem, namely a {\it conformal boundary value}.

The main result  of this paper is the following. 

\begin{thm}\label{mainthm 1}
There exists a Jordan domain $\Omega\subset \hat{\mathbb C}$ with $1$-Hausdorff dimensional boundary such that, any Schoenflies solution of  any conformal boundary value $\varphi\colon \mathbb S^1 \to \partial\Omega$ or $\phi \colon \partial\Omega \to \mathbb S^1$ is not in $W^{1,\,1}_{\loc}(\mathbb C)$ (even not in $BV_{\loc}(\mathbb C)$ ). 
\end{thm}

%\begin{thm}\label{mainthm 2}
%There exists a Jordan domain $\Omega\subset \hat{\mathbb C}$ with rectifiable boundary such that,   $\varphi\colon \mathbb D\to \Omega$, any Schoenflies solution of every conformal boundary value $\varphi\colon \partial \Omega \to \mathbb S^1$  is not in $W^{1,\,1}_{\loc}(\mathbb C)$ (even not in $BV_{\loc}(\mathbb C)$ ).
%\end{thm}

%We remark that the two theorems above also holds if we replace the conformality of $\varphi$ by quasiconformality 
%%if such a domain allows extension of quasiconformal mappings to the boundary as a homeomorphism \cite{R1968}
%with the same argument. 
This result indicates that, in general, one cannot expect the regularity of Schoenflies solutions to a given boundary value to be better than homeomorphism; even if the boundary value is given by a (extended) conformal map (which is a quite natural choice). Thus, geometric assumptions on the Jordan domain in question and (energy) controls on the boundary value are necessary. One can see e.g.\ \cite{AIMO2005, V2007, IMS2009, KXW2018} for recent results in this direction. Especially in the very recent paper \cite{KO2018} Koski and Onninen give  positive answers to Question~\ref{ques} under certain circumstances.

The notation in the paper is quite standard. 
%When we make estimates, we sometimes write the constants as 
%positive real numbers $C(\cdot)$ with
%the parenthesis including all the parameters on which the constant depends. The constant $C(\cdot)$ may
%vary between appearances, even within a chain of inequalities.
%By $a\lesssim b$ we mean $a\le C b$ for some constant $C \ge 2$.
The Euclidean distance between two sets $A,\,B \subset \mathbb R^2$ is denoted by $\dist(A,\,B)$. 
We denote by $\ell(\gamma)$ the length of a curve $\gamma$. For a set $A\subset \mathbb R^2$, we write its boundary as $\partial A$ , and its closure as $\overline A$, respectively, with respect to the Euclidean topology.  
We use the notation $\mathcal H^1$ for $1$-dimensional Hausdorff measure. 

{\bf Acknowledgement: } The author would like to express his sincere gratitude to the  referee for his (or her) nice review and useful suggestions on this paper. Especially a serious mistake in the previous version was pointed out.  The author also would like to thank Professor Jani Onninen for posing this interesting question.

\section{Proof of Theorem~\ref{mainthm 1}}

Define the {\it{inner distance with respect to $\Omega$}} between 
$x,\,y\in\Omega$ by
$$\dist_{\Omega}(x,\,y)=\inf_{\gamma\subset \Omega} \ell(\gamma),$$
where the infimum runs over all curves joining $x$ and $y$ in $\Omega.$
%The inner diameter $\diam_{\Omega}(E)$ of a set $E\subset \Omega$ is then defined in
%the usual way.

\subsection{Schoenflies solution of  conformal boundary value $\varphi\colon \mathbb S^1 \to \partial\Omega$}

The idea of the proof is that, we construct a Jordan domain $\Omega\subset \hat{\mathbb C}$ satisfying that there exists a (Cantor) set $E\subset\partial \Omega$ such that,
\begin{enumerate}[(i)]
\item  for any conformal $\varphi\colon\mathbb D \to \Omega$, i.e.\ for any conformal boundary value, we have $$\mathcal H^1(\varphi^{-1}(E))>0;$$
\item for any point $  x$ in the complementary domain $\wz \Omega$, 
$$\dist_{\wz \Omega}(x,\,E\setminus\{(0,\,0),\,(1,\,0)\})=\infty.$$
\end{enumerate}
If such a Jordan domain exists (see Lemma~\ref{harmonic measure} below), then by (i) and (ii), any  Schoenflies solution of the conformal boundary value $\varphi$ is not in $W^{1,\,1}_{\loc}$ (even not in $BV_{\loc}$) by Fubini's theorem;
indeed,   such a solution maps a family of radial segments in the exterior of the unit disk (with finite length) into a family curves of infinite length in $\wz \Omega$. By calculating in the polar coordinate we know that such a map cannot be in   $W^{1,\,1}_{\loc}$ (even not in $BV_{\loc}$). 
Hence Theorem~\ref{mainthm 1} follows.

We first construct a Jordan curve $\Gamma$ in the plane. Towards this,  let us recall the construction of a fat Cantor set $E\subset [0,\,1]$ on the real axis. Let $C_0=I_{0,\,1}=[0,\,1]$ and $C_i$ with $i\ge 1$ recursively as follows: When $I_{i,\,j}=[a,\,b]$ has been defined, let 
$$I_{i+1,\,2j-1}=\left[a,\,\frac {a+b-4^{-i}}{2}\right] \text{ and } \, I_{i+1,\,2j}=\left[\frac {a+b+4^{-i}}{2},\,b\right];$$
i.e.\ we remove an open interval of length $4^{-i}$  from the middle of the interval $I_{i,\,j}$. Then we set
$$C_i=\bigcup_{j=1}^{2^i}I_{i,\,j}.$$
The set $E$ is finally given by
$$E=\bigcap_{i=1}^{\infty}C_i.$$
A simple calculation shows that, for every $i\in \mathbb N$ and $1\le j\le 2^i$, each interval $I_{i,\,j}$ has length 
\begin{equation}\label{dist}
\frac {2^i+1}{2^{2i+1}}\in (2^{-i-1},\,2^{-i}]. 
\end{equation}
Thus $C_i$, and hence $E$ is well-defined. Moreover, $E$ has positive $\mathcal H^1$-measure; note that at  step $i,\,i\ge 1$ there are  $2^i$ intervals removed with total length $2^{-i-1}$. 

We now construct a sequence of simple curves $\gamma_i$ based on the construction of $E$. Again we proceed inductively according to the index $i$. For $i\in \mathbb N$ and $1\le j\le 2^i$, denote by $I'_{i,\,j}\subset I_{i,\,j}$ the interval removed from $I_{i,\,j}$  in the construction of $E$. 
Let $\gamma_0$ be the interval $[0,\,1]$. When $\gamma_{i-1},\,i\ge 1$ has been defined, we replace every open interval $I'_{i,\,j},\,1\le j\le 2^i$, contained in $\gamma_{i-1}$, by a curve 
$$\gamma_{i,\,j}=\partial (I'_{i,\,j}\times [0,\,2^{-i}])\setminus  (I'_{i,\,j}\times \{0\}), $$
consisting of three line segments, where $\times$ means the Cartesian product.  We then obtain $\gamma_i$. See Figure~\ref{1}.
\begin{figure}
 \centering
 \def\svgwidth{300pt}
 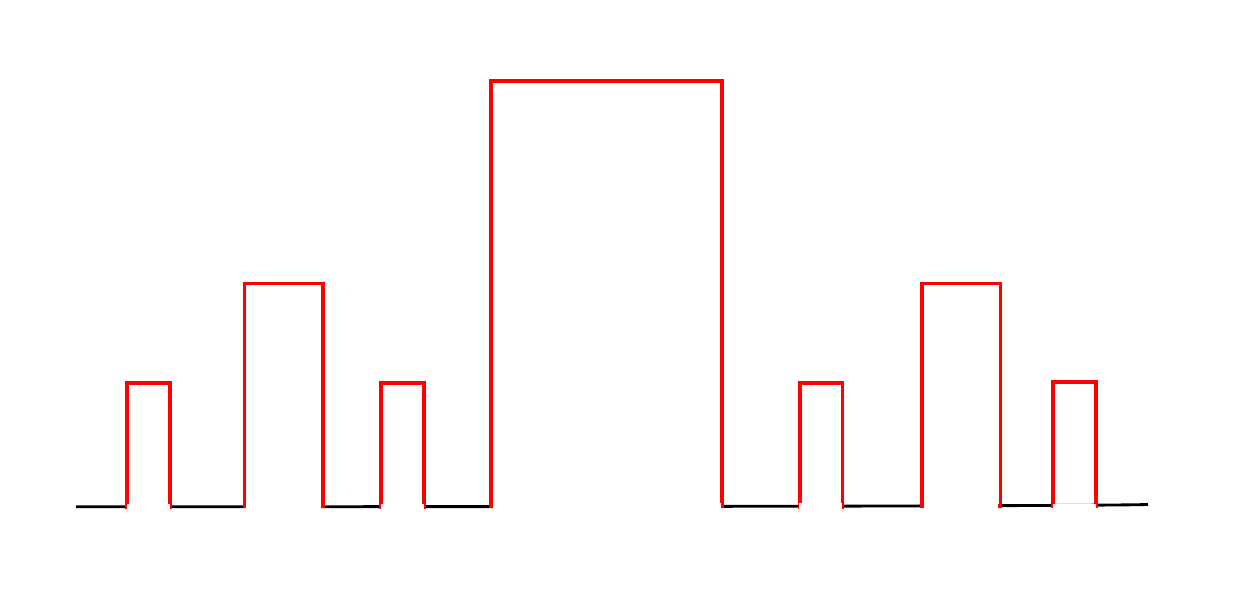
 \caption{The curve $\gamma_2$ is shown in the figure. In the previous steps the intervals $I'_{0,\,1},\,I'_{1,\,1},\,I'_{1,\,2}$ were replaced by curves $\gamma_{0,\,1},\,\gamma_{1,\,1},\,\gamma_{1,\,2}$, and in the current step four more intervals are replaced.}
 \label{1}
\end{figure}
Since $\{\gamma_i\}$ (under suitable parameterizations) is a Cauchy sequence of curves in the plane with respect to the supremum distance, the limit $\gamma$ exits and  is a curve. Moreover, $\gamma$ is simple. 

For fixed $n\in \mathbb N$, there are $2^{n+1}-1$ curves $\gamma_{i,\,j}$ intersecting $\mathbb R\times (2^{-n-1},\,2^{-n}]$. Indeed, if $\gamma_{i,\,j} \cap (\mathbb R\times (2^{-n-1},\,2^{-n}]) \neq \emptyset$, then $i\le n$.  The distance between any two of these curves is strictly larger than $2^{-n-1}$ by \eqref{dist}.

We next construct a sequence of new curves $\Gamma_n$ according to the index $n$. 
First of all define $\Gamma_0=\gamma.$ When $\Gamma_{n-1},\,n\ge 1$ has been defined, we modify the segments in 
$$\gamma_{i,\,j} \cap (\mathbb R\times (2^{-n-1},\,2^{-n}]),\ 0\le i\le n$$
 to obtain $\Gamma_n$. 
Recall that $\gamma_{i,\,j}$ replaces the interval $I'_{i,\,j}$ in the construction of $\gamma_i$. Denote by $a_{i,\,j}$ and $b_{i,\,j}$ the end points of $I'_{i,\,j}$ with $a_{i,\,j}<b_{i,\,j}$. 
Then for every $1\le i\le n$, 
$$\gamma_{i,\,j}\cap (\mathbb R\times(2^{-n-1},\,2^{-n}])=(\{a_{i,\,j}\}\times (2^{-n-1},\,2^{-n}]) \cup (\{b_{i,\,j}\}\times (2^{-n-1},\,2^{-n}])$$
and each $1\le k \le 2^{n}-1$, we replace each segment 
$$\{a_{i,\,j}\}\times [2^{-n-1}+(4k) 2^{-2n-3},\,2^{-n-1}+ (4k+1) 2^{-2n-3}]$$
by 
\begin{align*}
A_{i,\,j}^{n,\,k}:=&\partial \left([a_{i,\,j}-2^{-n-1},\,a_{i,\,j}]\times [2^{-n-1}+(4k) 2^{-2n-3},\,2^{-n-1}+ (4k+1) 2^{-2n-3}]\right) \\ 
& \qquad \setminus \{a_{i,\,j}\}\times [2^{-n-1}+(4k) 2^{-2n-3},\,2^{-n-1}+ (4k+1) 2^{-2n-3}], 
\end{align*}
and 
$$\{b_{i,\,j}\}\times [2^{-n-1}+(4k+2) 2^{-2n-3},\,2^{-n-1}+ (4k+3) 2^{-2n-3}]$$
by 
\begin{align*}
B_{i,\,j}^{n,\,k}:=&\partial \left([b_{i,\,j},\,b_{i,\,j}+2^{-n-1}]\times [2^{-n-1}+(4k+2) 2^{-2n-3},\,2^{-n-1}+ (4k+3) 2^{-2n-3}]\right) \\ 
& \qquad \setminus \{b_{i,\,j}\}\times [2^{-n-1}+(4k+2) 2^{-2n-3},\,2^{-n-1}+ (4k+3) 2^{-2n-3}]. 
\end{align*}
This gives us the new curve $\Gamma_n$.  See Figure~\ref{2}. 

\begin{figure}
 \centering
 \def\svgwidth{300pt}
 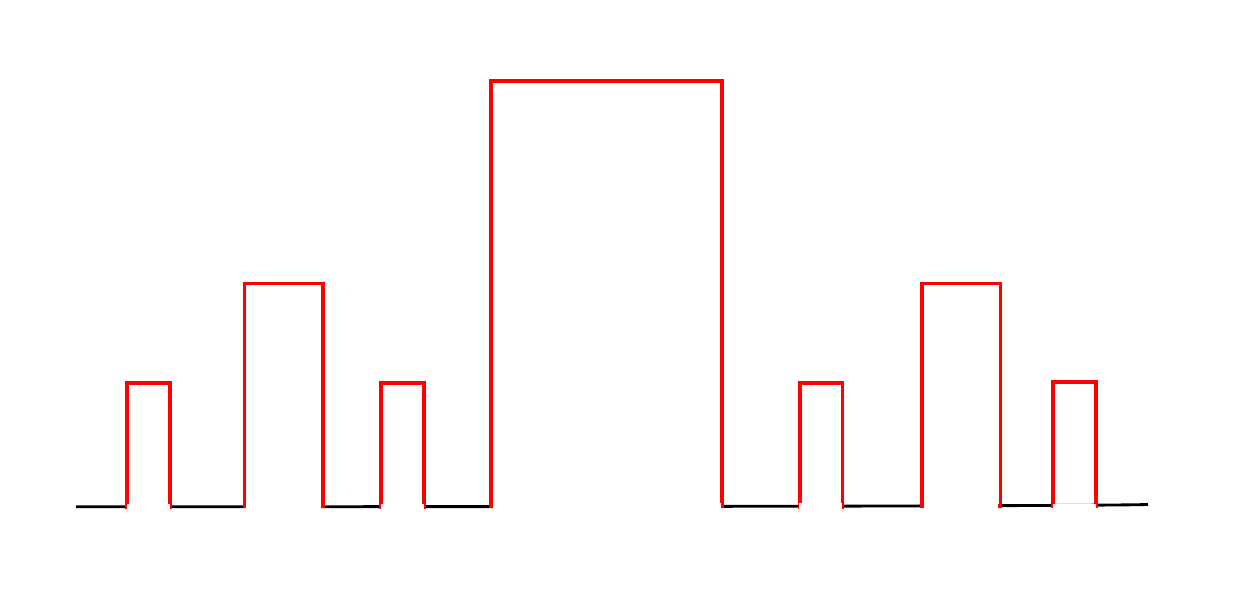
 \caption{The curve $\Gamma_2$ is shown in the figure, with the replacement of certain segments contained in $\Gamma_1$ by parts of boundaries of some rectangles, receptively.}
 \label{2}
\end{figure}

Again since $\{\Gamma_n\}$ (under suitable parameterizations) is a Cauchy sequence of curves in the plane with respect to the supremum distance, we conclude that $\Gamma_n$ converges uniformly to some curve $\Gamma_{\infty}$ as $n\to \infty$. Moreover, according to our construction $\Gamma_{\infty}$ is simple. 
Define 
$$\Gamma=\Gamma_{\infty}\cup \left(\partial  ([0,\,1]\times [-1,\,0] )\setminus [0,\,1]\times \{0\}\right).$$
Since $\Gamma_{\infty}$ is simple, then also $\Gamma$ is simple, and hence Jordan as it is closed. 
We denote by $\Omega$ the bounded component of $\mathbb C\setminus \Gamma. $ 

Notice that $\partial \Omega$ is a countable union of rectifiable curves, even though it does not have finite length. Since the Hausdorff dimension of a countable union of sets is the supremum of the Hausdorff dimensions of the sets, see e.g.\ \cite[Page 81, Section 5.9]{M1995}, we conclude that $\partial \Omega$ is a set of Hausdorff dimension $1$.
\vskip 0.15 cm\noindent

Recall the Cantor set $E$. Now let us check that the Jordan domain $\Omega$ satisfies the two properties (i) and (ii). We remark that, in our construction, for any point $x$ in $\Omega$, we have
$$\dist_{\Omega}(x,\,E)<\infty.$$

Before showing (i), we note that, property (i) is stated with respect to all conformal maps. However, since two such Riemann maps differ from each other by a M\"obius transform on the unit disk,
we may assume that $\varphi(0)$ is the center of the square $[0,\,1]\times [-1,\,0]$.

Recall that the harmonic measure in the unit disk is defined via the Poisson kernel, and then in any Jordan domain via the (extended) Riemann mapping. 
For $E\subset \partial \Omega$, we use $\omega(x_0,\,E,\,\Omega)$ to designate {\it the harmonic measure of $E$ at $x_0$ in $\Omega$}. 
It is known that 
 $\omega(x_0,\,E,\,\Omega)= u(x_0)$
where $u$ is the (unique) harmonic function in $\Omega$ whose boundary value is the characteristic function of $E$ on $\partial \Omega$. 
We refer to \cite{GM2005} for more details. 

\begin{lem}\label{harmonic measure}
The Jordan domain $\Omega$ constructed above satisfies properties (i) and (ii).  
\end{lem}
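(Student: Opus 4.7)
My plan is to verify properties (i) and (ii) separately.

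For (i), after normalising so that $\varphi(0)=x_0$ is the preimage under stereographic projection of the centre of the square $[0,1]\times[-1,0]$, I would invoke the conformal invariance of harmonic measure:
\[
\mathcal H^1(\varphi^{-1}(E)) \;=\; 2\pi\,\omega(0,\varphi^{-1}(E),\mathbb D) \;=\; 2\pi\,\omega(x_0,E,\Omega).
\]
So it suffices to show $\omega(x_0,E,\Omega)>0$, and I would obtain this by comparing with the open square $Q=(0,1)\times(-1,0)\subset \Omega$. Since $E\subset \partial Q \cap \partial \Omega$, any Brownian path that exits $Q$ through $E$ simultaneously exits $\Omega$ through $E$, which gives $\omega(x_0,E,Q)\le\omega(x_0,E,\Omega)$. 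On the square itself, harmonic measure at the centre is comparable to $\mathcal H^1$ on each edge via the Poisson kernel, and $\mathcal H^1(E)>0$ by the construction of the fat Cantor set, so the lower bound is strictly positive.

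For (ii), let $\widetilde x\in\widetilde\Omega$, $y\in E$, and let $\gamma\subset\widetilde\Omega$ be any rectifiable curve starting at $\widetilde x$ with $\gamma(t)\to y$ as $t\to 1$; the goal is to show $\ell(\gamma)=\infty$. The plan is to decompose $\gamma$ by the horizontal strips $S_n:=\mathbb R\times(2^{-n-1},2^{-n}]$ and establish a uniform lower bound $\ell(\gamma\cap S_n)\ge c>0$ for every sufficiently large $n$. For such $n$, continuity of the height function along $\gamma$ together with the fact that it tends to $0$ lets me pick the last parameter $t_n^+$ at which the height equals $2^{-n}$ and the first subsequent parameter $t_n^-$ at which it equals $2^{-n-1}$; on $[t_n^+,t_n^-]$ the height stays in $[2^{-n-1},2^{-n}]$ and never revisits $2^{-n}$. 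Because each tooth $\gamma_{i,j}$ with $i\le n$ is a vertical wall spanning the whole strip, the sub-arc $\gamma_n:=\gamma|_{[t_n^+,t_n^-]}$ is trapped in a single channel between two consecutive such teeth. Within that channel the $\Gamma_n$-step has attached $\sim 2^{n+1}$ rectangular bulges alternating between the two walls, at staggered heights of vertical spacing $\sim 2^{-2n-3}$ and each of horizontal extent $2^{-n-1}$; since the channel width is itself of the same order $\sim 2^{-n-1}$, no constant $x$-slice can descend through all of them, so $\gamma_n$ is forced to shift horizontally by $\gtrsim 2^{-n-1}$ at each bulge it meets. Summing over bulges gives $\ell(\gamma_n) \gtrsim 2^{n+1}\cdot 2^{-n-1}=1$, so $\ell(\gamma)\ge \sum_{n\ge n_0} \ell(\gamma_n)=\infty$, which establishes $\dist_{\widetilde\Omega}(\widetilde x,E)=\infty$.

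The step I expect to be the main obstacle is the geometric estimate in (ii): carefully verifying that in a channel whose width is of the same order as the bulge horizontal extent, the staggered left-wall and right-wall bulges collectively obstruct every $x$-slice along which an oscillation-free descent could be attempted, and quantifying the resulting horizontal oscillation so that it accumulates to at least a universal constant in each strip. This hinges on the precise quantitative balance between the bulge width $2^{-n-1}$, the channel width, and the vertical spacing $2^{-2n-3}$ chosen in the $\Gamma_n$ construction. The argument for (i), by contrast, is essentially classical harmonic-measure technology and the main point there is only the normalisation $\varphi(0)=x_0$.
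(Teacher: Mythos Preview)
Your proposal is correct and follows essentially the same route as the paper. For (i) you use the probabilistic (Brownian-exit) formulation of the comparison $\omega(x_0,E,Q)\le\omega(x_0,E,\Omega)$ where the paper phrases the same inequality via the maximum principle for the subharmonic zero-extension of the $Q$-harmonic function; for (ii) your strip-by-strip channel-trapping argument with $\sim 2^n$ forced horizontal oscillations of size $\sim 2^{-n}$ is exactly the mechanism the paper invokes (more tersely) when it says the curve ``has to oscillate $2^n$ times and each time it goes at least $2^{-n}$.''
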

\begin{proof}
Towards (i), we first observe that 
\begin{equation}\label{comparison}
\omega(\varphi(0),\,E,\,\Omega)\ge \omega(\varphi(0),\,E,\,Q)>0,
\end{equation}
 where $Q$ is the open square $(0,\,1)\times (-1,\,0)$. Indeed, the first estimate comes from the comparison principle of harmonic measures, %Indeed, let $u_1$ and $u_2$ be the harmonic functions on $\Omega$ and $Q\subset \Omega$, respectively, with  the characteristic function of $E$ on the corresponding boundaries as the boundary values. 
%By zero-extension to $\Omega$ we have that $u_2$ is a subharmonic function on $\Omega$ with the same boundary value as $u_1$. Thus 
% $$u_1(\varphi(0))\ge u_2(\varphi(0))>0,$$
while the second inequality follows from F.\  and M.\ Riesz theorem since its $1$-Hausdorff measure is strickly positive. 
 
By the conformal invariance of harmonic measure we have 
$$\omega(0,\,\varphi^{-1}(E),\,\mathbb D)>0.$$
According to the definition of harmonic measures in the unit disk, we conclude (i). 

To show (ii), note that in our construction, any curve in the unbounded component of $\mathbb R^2\setminus \Gamma$ towards $E\setminus\{(0,\,0),\,(1,\,0)\}$ has length at least $\frac 1 2$ in the region $\mathbb R\times (2^{-n-1},\,2^{-n}] $ for $n$ large enough; the curve has to oscillate $2^n$ times and each time it goes at least $2^{-n-1}$. This implies that any curve in the unbounded component of $\mathbb R^2\setminus \Gamma$ towards $E\setminus\{(0,\,0),\,(1,\,0)\}$ has infinite length. Property (ii) is complete. 
\end{proof}

\subsection{Schoenflies solution of  conformal boundary value $\phi\colon \partial\Omega \to \mathbb S^1$}
 Let $\phi\colon \Omega \to \mathbb D$ be a conformal map giving the conformal boundary value via Carath\'eodory's theorem. 
By composing with a suitable M\"obius map, we may assume that $\phi(z_0)=0$, where $z_0$ is the center of open square $Q=(0,\,1)\times (-1,\,0)$; in the general case the constants below will further depending on the   M\"obius transform. 
We show that any homeomorphic extension of $\phi$ is not in $W^{1,\,1}_{\loc}$. 

Towards this, recall that in the construction of $\Gamma=\partial \Omega$ we attached ``arms'' $A_{i,\,j}^{n,\,k}$ and $B_{i,\,j}^{n,\,k}$ to every curve $\gamma_{i,\,j}$. We first claim that, there exists an absolute constant $c>0$ such that, for $n\ge 3$,
\begin{equation}\label{dist in disk}
\dist(\phi(A_{i,\,j}^{n,\,k}),\,\phi(B_{i',\,j'}^{n,\,k'}))\ge c 2^{-n} 
\end{equation}
whenever $I'_{i,\,j},\,I'_{i',\,j'} \subset \left[\frac 5 {32},\,\frac {27} {32}\right]$ and either $i\neq i'$ or $j\neq j'$. 

Indeed, let us fix $A_{i,\,j}^{n,\,k}$ and $B_{i',\,j'}^{n,\,k'}$. 
According to our construction, there exists an interval $J\in \left\{I_{n+1,\,j}\right\}_{j=1}^{2^{n+1}}$ such that $J\subset \left[\frac 5 {32},\,\frac {27} {32}\right]$ is between $I'_{i,\,j}$ and $I'_{i',\,j'}$.  Since $\phi\colon \partial\Omega \to \mathbb S^1$ is a homeomorphism, by the construction of $\partial\Omega$ and the geometry of the unit circle, we have that
$$\dist(\phi(A_{i,\,j}^{n,\,k}),\,\phi(B_{i',\,j'}^{n,\,k'}))\ge c_1\mathcal H^1(\phi(J\cap E))$$
for some absolute constant $c_1$. 
Therefore it suffices to show that $\mathcal H^1(\phi(J\cap E))\ge c_2 2^{-n}$ for some absolute constant $c_2$. 

Again by the invariance of harmonic measure under conformal map and the comparison principle of harmonic measures, 
$$\omega(0,\, \phi(J\cap E),\,\mathbb D)=\omega(z_0,\,J\cap E,\,\Omega)\ge \omega(z_0,\,J\cap E,\,Q).$$
According to Schwarz-Christoffel formula \cite[Chapter 3.1]{P1992}, since $J\subset  \left[\frac 5 {32},\,\frac {27} {32}\right]$ is away from the corner of $Q$, we have
$$\omega(z_0,\,J,\,Q)\ge c_3 2^{-n}$$
for some absolute constant $c_3$; note that the length of $J$ is $2^{-n-2}+2^{-1}4^{-n-1}$, and $E$ is a self-similar fat Cantor set. Therefore, we conclude \eqref{dist in disk} via the Poisson formula in the unit disk. 

Let $\Phi$ be any  Schoenflies solution of the conformal boundary value $\phi$. By \eqref{dist in disk}, the image under $\phi$ of any vertical segment joining ``neighboring arms'' $A_{i,\,j}^{n,\,k}$ and $B_{i',\,j'}^{n,\,k}$ in the exterior of $\Omega$
 has length at least $c 2^{-n}$. 
Moreover, when $n\ge 3$, the intersection of the projections on the real axis of the  ``neighboring arms'' $A_{i,\,j}^{n,\,k}$ and $B_{i',\,j'}^{n,\,k}$ is an interval with length not less than $2^{-n-2}$, and there are at least $4^n$ pairs of those  ``neighboring arms'' contained in $\left[\frac 5 {32},\,\frac {27} {32}\right]\times [2^{-n-1},\,2^{-n}]$ up to a multiplicative constant. 
 Therefore by Fubini's theorem we conclude
\begin{align*}
\int_{\left[\frac 5 {32},\,\frac {27} {32}\right]\times [2^{-n-1},\,2^{-n}]} |D\Phi|\, dx\ge c' 2^{-n} 2^{-n-2} 4^n \ge  {c'} 2^{-2}
\end{align*}
for some absolute constant $c'$.
Therefore, in any Euclidean neighborhood of $E\cap \left[\frac 5 {32},\,\frac {27} {32}\right]$ the $W^{1,\,1}_{\loc}$-energy of $\Phi$ is infinite, and a similar argument shows that $\Phi\notin BV_{\loc}$. This concludes the second part of Theorem~\ref{mainthm 1}.

\end{document}